\numberwithin{equation}{subsection}
\theoremstyle{plain}
\newtheorem{thm}{Theorem}[section]
\newtheorem{prop}[thm]{Proposition}
\newtheorem{lem}[thm]{Lemma}
\theoremstyle{definition}
\newtheorem{defn}[thm]{Definition}
\theoremstyle{remark}
\newtheorem{rmk}[thm]{Remark}
\newtheorem{exmp}[thm]{Example}
\numberwithin{figure}{section}
\begin{document}



\newcommand{\id}{\mathrm{id}}
\newcommand{\into}{\hookrightarrow}
\newcommand{\onto}{\twoheadrightarrow}
\newcommand{\otspam}{\mathbin{\reflectbox{$\mapsto$}}}

\newcommand{\DD}{\mathbb{D}}
\newcommand{\fbar}{\overline{f}}
\newcommand{\gobar}{\overline{\omega}}


\newcommand{\e}{\mathrm{e}}
\newcommand{\xbar}{\bar{x}}
\newcommand{\ybar}{\bar{y}}
\newcommand{\vp}{\mathbf{p}}
\newcommand{\vw}{\mathbold{w}}
\newcommand{\vx}{\mathbf{x}}
\newcommand{\vy}{\mathbf{y}}
\newcommand{\vga}{\mathbold{\alpha}}
\newcommand{\vgb}{\mathbold{\beta}}
\newcommand{\vxi}{\mathbold{\xi}}
\newcommand{\im}{\operatorname{im}}
\newcommand{\coker}{\operatorname{coker}}
\newcommand{\Hom}{\operatorname{Hom}}
\newcommand{\End}{\operatorname{End}}
\newcommand{\res}{\operatorname{res}}
\newcommand{\Ann}{\operatorname{Ann}}
\newcommand{\Spec}{\operatorname{Spec}}
\newcommand{\Proj}{\operatorname{Proj}}
\newcommand{\oo}{\mathscr{O}}
\newcommand{\Pic}{\operatorname{Pic}}
\newcommand{\acton}{\mathbin{\rotatebox[origin=c]{-90}{$\circlearrowleft$}}}
\newcommand{\actedonby}{\mathbin{\rotatebox[origin=c]{90}{$\circlearrowright$}}}
\newcommand{\scr}[1]{\mathscr{#1}}
\newcommand{\obj}{\operatorname{obj}}
\newcommand{\Ad}{\operatorname{Ad}}
\newcommand{\ad}{\operatorname{ad}}
\newcommand{\LgrpGL}{\mathrm{GL}}
\newcommand{\LgrpSL}{\mathrm{SL}}
\newcommand{\LgrpO}{\mathrm{O}}
\newcommand{\LgrpSO}{\mathrm{SO}}
\newcommand{\LgrpSp}{\mathrm{Sp}}
\newcommand{\LgrpU}{\mathrm{U}}
\newcommand{\LgrpSU}{\mathrm{SU}}
\newcommand{\LgrpPGL}{\mathrm{PGL}}
\newcommand{\LgrpPSL}{\mathrm{PSL}}


\newcommand{\ga}{\alpha} 
\newcommand{\gb}{\beta} 
\newcommand{\gc}{\gamma} \newcommand{\gC}{\Gamma}
\newcommand{\gd}{\delta} \newcommand{\gD}{\Delta}
\newcommand{\gve}{\varepsilon} \renewcommand{\ge}{\epsilon} 
\newcommand{\gi}{\iota} 
\newcommand{\gk}{\kappa} 
\newcommand{\gl}{\lambda} \newcommand{\gL}{\Lambda}
\newcommand{\go}{\omega} \newcommand{\gO}{\Omega}
\newcommand{\gvp}{\varphi} 
\newcommand{\gr}{\rho} 
\newcommand{\gs}{\sigma} \newcommand{\gS}{\Sigma}
\newcommand{\gth}{\theta} \newcommand{\gTh}{\Theta}
\newcommand{\gu}{\upsilon} 
\newcommand{\gz}{\zeta} 

\newcommand{\bbA}{\mathbb{A}}
\newcommand{\bbB}{\mathbb{B}}
\newcommand{\bbC}{\mathbb{C}}
\newcommand{\bbD}{\mathbb{D}}
\newcommand{\bbE}{\mathbb{E}}
\newcommand{\bbF}{\mathbb{F}}
\newcommand{\bbG}{\mathbb{G}}
\newcommand{\bbH}{\mathbb{H}}
\newcommand{\bbI}{\mathbb{I}}
\newcommand{\bbJ}{\mathbb{J}}
\newcommand{\bbK}{\mathbb{K}}
\newcommand{\bbL}{\mathbb{L}}
\newcommand{\bbM}{\mathbb{M}}
\newcommand{\bbN}{\mathbb{N}}
\newcommand{\bbO}{\mathbb{O}}
\newcommand{\bbP}{\mathbb{P}}
\newcommand{\bbQ}{\mathbb{Q}}
\newcommand{\bbR}{\mathbb{R}}
\newcommand{\bbS}{\mathbb{S}}
\newcommand{\bbT}{\mathbb{T}}
\newcommand{\bbU}{\mathbb{U}}
\newcommand{\bbV}{\mathbb{V}}
\newcommand{\bbW}{\mathbb{W}}
\newcommand{\bbX}{\mathbb{X}}
\newcommand{\bbY}{\mathbb{Y}}
\newcommand{\bbZ}{\mathbb{Z}}

\newcommand{\calA}{\mathcal{A}}
\newcommand{\calB}{\mathcal{B}}
\newcommand{\calC}{\mathcal{C}}
\newcommand{\calD}{\mathcal{D}}
\newcommand{\calE}{\mathcal{E}}
\newcommand{\calF}{\mathcal{F}}
\newcommand{\calG}{\mathcal{G}}
\newcommand{\calH}{\mathcal{H}}
\newcommand{\calI}{\mathcal{I}}
\newcommand{\calJ}{\mathcal{J}}
\newcommand{\calK}{\mathcal{K}}
\newcommand{\calL}{\mathcal{L}}
\newcommand{\calM}{\mathcal{M}}
\newcommand{\calN}{\mathcal{N}}
\newcommand{\calO}{\mathcal{O}}
\newcommand{\calP}{\mathcal{P}}
\newcommand{\calQ}{\mathcal{Q}}
\newcommand{\calR}{\mathcal{R}}
\newcommand{\calS}{\mathcal{S}}
\newcommand{\calT}{\mathcal{T}}
\newcommand{\calU}{\mathcal{U}}
\newcommand{\calV}{\mathcal{V}}
\newcommand{\calW}{\mathcal{W}}
\newcommand{\calX}{\mathcal{X}}
\newcommand{\calY}{\mathcal{Y}}
\newcommand{\calZ}{\mathcal{Z}}

\newcommand{\scrA}{\mathscr{A}}
\newcommand{\scrB}{\mathscr{B}}
\newcommand{\scrC}{\mathscr{C}}
\newcommand{\scrD}{\mathscr{D}}
\newcommand{\scrE}{\mathscr{E}}
\newcommand{\scrF}{\mathscr{F}}
\newcommand{\scrG}{\mathscr{G}}
\newcommand{\scrH}{\mathscr{H}}
\newcommand{\scrI}{\mathscr{I}}
\newcommand{\scrJ}{\mathscr{J}}
\newcommand{\scrK}{\mathscr{K}}
\newcommand{\scrL}{\mathscr{L}}
\newcommand{\scrM}{\mathscr{M}}
\newcommand{\scrN}{\mathscr{N}}
\newcommand{\scrO}{\mathscr{O}}
\newcommand{\scrP}{\mathscr{P}}
\newcommand{\scrQ}{\mathscr{Q}}
\newcommand{\scrR}{\mathscr{R}}
\newcommand{\scrS}{\mathscr{S}}
\newcommand{\scrT}{\mathscr{T}}
\newcommand{\scrU}{\mathscr{U}}
\newcommand{\scrV}{\mathscr{V}}
\newcommand{\scrW}{\mathscr{W}}
\newcommand{\scrX}{\mathscr{X}}
\newcommand{\scrY}{\mathscr{Y}}
\newcommand{\scrZ}{\mathscr{Z}}

\newcommand{\frA}{\mathfrak{A}}
\newcommand{\frB}{\mathfrak{B}}
\newcommand{\frC}{\mathfrak{C}}
\newcommand{\frD}{\mathfrak{D}}
\newcommand{\frE}{\mathfrak{E}}
\newcommand{\frF}{\mathfrak{F}}
\newcommand{\frG}{\mathfrak{G}}
\newcommand{\frH}{\mathfrak{H}}
\newcommand{\frI}{\mathfrak{I}}
\newcommand{\frJ}{\mathfrak{J}}
\newcommand{\frK}{\mathfrak{K}}
\newcommand{\frL}{\mathfrak{L}}
\newcommand{\frM}{\mathfrak{M}}
\newcommand{\frN}{\mathfrak{N}}
\newcommand{\frO}{\mathfrak{O}}
\newcommand{\frP}{\mathfrak{P}}
\newcommand{\frQ}{\mathfrak{Q}}
\newcommand{\frR}{\mathfrak{R}}
\newcommand{\frS}{\mathfrak{S}}
\newcommand{\frT}{\mathfrak{T}}
\newcommand{\frU}{\mathfrak{U}}
\newcommand{\frV}{\mathfrak{V}}
\newcommand{\frW}{\mathfrak{W}}
\newcommand{\frX}{\mathfrak{X}}
\newcommand{\frY}{\mathfrak{Y}}
\newcommand{\frZ}{\mathfrak{Z}}
\newcommand{\fra}{\mathfrak{a}}
\newcommand{\frb}{\mathfrak{b}}
\newcommand{\frc}{\mathfrak{c}}
\newcommand{\frd}{\mathfrak{d}}
\newcommand{\fre}{\mathfrak{e}}
\newcommand{\frf}{\mathfrak{f}}
\newcommand{\frg}{\mathfrak{g}}
\newcommand{\frh}{\mathfrak{h}}
\newcommand{\fri}{\mathfrak{i}}
\newcommand{\frj}{\mathfrak{j}}
\newcommand{\frk}{\mathfrak{k}}
\newcommand{\frl}{\mathfrak{l}}
\newcommand{\frm}{\mathfrak{m}}
\newcommand{\frn}{\mathfrak{n}}
\newcommand{\fro}{\mathfrak{o}}
\newcommand{\frp}{\mathfrak{p}}
\newcommand{\frq}{\mathfrak{q}}
\newcommand{\frr}{\mathfrak{r}}
\newcommand{\frs}{\mathfrak{s}}
\newcommand{\frt}{\mathfrak{t}}
\newcommand{\fru}{\mathfrak{u}}
\newcommand{\frv}{\mathfrak{v}}
\newcommand{\frw}{\mathfrak{w}}
\newcommand{\frx}{\mathfrak{x}}
\newcommand{\fry}{\mathfrak{y}}
\newcommand{\frz}{\mathfrak{z}}
\newcommand{\frgl}{\mathfrak{gl}}
\newcommand{\frsl}{\mathfrak{sl}}
\newcommand{\frso}{\mathfrak{so}}
\newcommand{\frsp}{\mathfrak{sp}}
\newcommand{\frsu}{\mathfrak{su}}

\title[]{Toric varieties with ample tangent bundle}
\author{Kuang-Yu Wu}
\address{Kuang-Yu Wu, Department of Mathematics, Statistics, and Computer Science, University of Illinois at Chicago, Chicago, USA}
\email{kwu33@uic.edu}

\maketitle

\theoremstyle{plain}
\newtheorem*{thmA}{Theorem A}

\newcommand{\bfu}{\mathbf{u}}
\newcommand{\pair}[2]{\langle #1 , #2 \rangle}

\begin{abstract}
We give a simple combinatorial proof of the toric version of Mori's theorem
that the only $n$-dimensional smooth projective varieties with ample tangent bundle
are the projective spaces $\bbP^n$.
\end{abstract}

\section{Introduction}

It is a well-known theorem
that the only smooth projective varieties
(over an algebraically closed field $k$)
with ample tangent bundles
are the projective spaces $\bbP_k^n$.
This is first conjectured by Hartshorne \cite[Problem 2.3]{Har70}
and later proved by Mori \cite{Mor79}
using the full force of his now-celebrated ``bend and break'' technique.
Here we say that a vector bundle $\calE$ is ample (resp.\ nef)
if the line bundle $\calO_{\bbP\calE}(1)$ on the projectivized bundle $\bbP\calE$ is ample (resp.\ nef).

In this paper,
we consider a toric version of this theorem
and show that it admits a simple combinatorial proof.

\begin{thm} \label{thm}
	Let $X$ be an $n$-dimensional smooth projective toric variety (over an algebraically closed field $k$) with ample tangent bundle $\calT_X$.
	Then $X$ is isomorphic to $\mathbb{P}^n_k$.
\end{thm}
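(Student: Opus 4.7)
The plan is to convert the ampleness hypothesis on $\calT_X$ into a combinatorial positivity condition on the wall relations of the fan $\Sigma$ of $X$, and then argue by induction on $n$ that such a fan must be the fan of $\bbP^n$.

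First, I would apply the standard splitting formula for the tangent bundle along an invariant curve. For each wall $\tau \in \Sigma(n-1)$ with two adjacent maximal cones $\sigma, \sigma'$ and distinguishing primitive rays $v \in \sigma$, $v' \in \sigma'$, the unique wall relation $v + v' + \sum_{w \in \tau(1)} c_w\,w = 0$ among primitive generators yields
\[
\calT_X|_{V(\tau)} \;\cong\; \calO_{\bbP^1}(2) \,\oplus\, \bigoplus_{w\in\tau(1)} \calO_{\bbP^1}(c_w).
\]
Since $\calT_X$ ample forces each restriction $\calT_X|_{V(\tau)}$ to be ample, every $c_w \geq 1$.

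Next I would induct on $n$, with base $n = 1$ trivial. The key structural observation is that positivity is hereditary: for each ray $\rho$, the star fan $\mathrm{Star}_\Sigma(\rho) \subset N_\bbR/\bbR\rho$ (the fan of the orbit closure $V(\rho)$) is smooth and complete, and its wall relations are the images of the wall relations of $\Sigma$ containing $\rho$, still with positive coefficients on the surviving rays. Hence by induction $V(\rho) \cong \bbP^{n-1}$ for every ray $\rho$ (substantively, for $n \geq 3$). Fixing a maximal cone $\sigma = \mathrm{cone}(u_1, \ldots, u_n)$ with the $u_i$ a $\bbZ$-basis of $N$, and letting $u_i'$ denote the opposite ray across the wall $\tau_i$ of $\sigma$ opposite $u_i$, the $\bbP^{n-1}$-structure of each star $V(u_i)$ pins down the shape of all maximal cones through $u_i$; matching these against the cones $\sigma_k$ adjacent to $\sigma$ across $\tau_k$ forces $u_i' = u_j'$ for all $i \neq j$ (once $n \geq 3$), collapsing all $u_i'$ to a single ray $u_0$. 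The positivity condition on the wall relations then pins down $u_0 = -(u_1 + \cdots + u_n)$ with every $c_w = 1$, and the resulting $n + 1$ maximal cones $\sigma, \sigma_1, \ldots, \sigma_n$ already cover $N_\bbR$; completeness of $\Sigma$ precludes any further rays or cones, so $\Sigma$ is the fan of $\bbP^n$.

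The star-fan reduction degenerates when $n = 2$, since every smooth complete $1$-fan is already $\bbP^1$; I would handle the surface case separately via Noether's formula. For a smooth projective toric surface, $K_X^2 + e(X) = 12$ rearranges to $\sum_i c_i = 12 - 3r$ (with $r$ the number of rays, using $D_i^2 = c_i$ and $e(X) = r$), and the conditions $c_i \geq 1$ and $r \geq 3$ together force $r = 3$, so $X = \bbP^2$. I expect the main obstacle to be the combinatorial bookkeeping in the inductive step: one must read off the shape of the maximal cones through each $u_i$ from the $\bbP^{n-1}$-structure of its star, match these against the $\sigma_k$ to collapse all the opposite rays $u_i'$ to a single $u_0$, and finally invoke completeness to rule out any maximal cone or ray outside the skeleton already identified.
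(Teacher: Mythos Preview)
Your approach is correct but genuinely different from the paper's. Both routes begin the same way: ampleness of $\calT_X$ is equivalent, via Theorem~\ref{thm:HMP} and Proposition~\ref{prop3}, to the condition that every wall relation has all its interior coefficients $c_w\ge 1$. From that point the paper does \emph{not} induct. Instead it passes to a polytope $P$ for $(X,D)$ and reinterprets the positivity as an angle condition (Proposition~\ref{prop4}): along each edge of every $2$-face $A$, the two adjacent interior angles sum to less than~$\pi$. Summing around $A$ forces $A$ to be a triangle, and a short adjacency argument then shows $P$ is an $n$-simplex. This is uniform in $n$, needs no base case, and avoids the combinatorial bookkeeping you flag as the main obstacle.

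Your inductive route via star fans is sound: the quotient of a wall relation by a ray it contains is again a wall relation with the same surviving coefficients, so each $V(\rho)$ satisfies the same hypothesis and is $\bbP^{n-1}$ by induction; the ray-count in each star then forces the images $\bar u_k'$ (for $k\neq i$) to coincide, and comparing across two different stars (or the small linear-algebra check when $n=3$) collapses all $u_k'$ to a single $u_0$, after which completeness finishes. The trade-offs: your argument buys the pleasant structural byproduct that every torus-invariant prime divisor is already $\bbP^{n-1}$, but at the cost of a dimension split (the $n=2$ case handled separately via $\sum c_i=12-3r$) and the matching argument you anticipate. The paper's polytope proof is shorter and treats all $n$ at once, though it hides the hereditary nature of the hypothesis that your approach makes explicit.
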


In the proof
we consider the polytope $P \subseteq \bbR^n$ corresponding to $X$
(together with any ample divisor $D$).
The key observation we make is that the ampleness of $\calT_X$ implies that
the sum of any pair of two adjacent angles on a $2$-dimensional face of $P$ is smaller than $\pi$.
It follows that $P$ has to be an $n$-simplex,
and hence $X$ is isomorphic to $\mathbb{P}^n$.


\subsection{Acknowledgment}

	The author was partially supported by the NSF grant DMS-1749447.
	I would like to thank my advisor Julius Ross for setting this project
	and for discussions about this project.

\section{Preliminaries}
{
	Here we list out some definitions and facts
	regarding toric varieties and toric vector bundles
	that we will use in this article.
	One may refer to \cite{Ful93,CLS11} for more details about toric varieties,
	and \cite{Pay08,DRJS18} for more details about toric vector bundles.
}
\subsection{Toric varieties}

We work throughout over an algebraically closed field $k$.
By a toric variety,
we mean an irreducible and normal algebraic variety $X$
containing a torus $T\cong(k^*)^n$ as a Zariski open subset
such that
the action of $T$ on itself (by multiplication)
extends to an algebraic action of $T$ on $X$.

Let $M$ be the group of the characters of $T$,
and $N$ the group of the $1$-parameter subgroups of $T$.
Both $M$ and $N$ are lattices of rank $n$
(equal to the dimension of $T$),
i.e. isomorphic to $\bbZ^n$.
They are dual to each other in the sense that
there is a natural pairing of $M$ and $N$ denoted by
$\langle\cdot,\cdot\rangle:M\times N\to\bbZ$.

Every toric variety $X$ is associated to a fan $\gS$ in $N_{\mathbb{R}}:=N\otimes_{\mathbb{Z}}\mathbb{R}\,(\cong\bbR^n)$.
A fan $\gS$ is said to be \textit{complete}
if it supports on the whole $N_{\bbR}$,
and is said to be \textit{smooth}
if every cone in $\gS$ is generated by
a subset of a $\bbZ$-basis of $N$.
A toric variety $X$ is complete
if and only if its associated fan $\gS$ is complete,
and $X$ is smooth if and only if $\gS$ is smooth.

There is an inclusion-reversing bijection
between the cones $\sigma\in\gS$
and the $T$-orbits in $X$.
Let $O_{\sigma} \subseteq X$ be the orbit corresponding to $\sigma$.
The codimension of $O_{\sigma}$ in $X$ is equal to
the dimension of $\sigma$.
Each cone $\gs \in \gS$ also corresponds to an open affine set $U_{\sigma} \in X$,
which is equal to the union of all the orbits $O_{\tau}$
corresponding to cones $\tau$ contained in $\sigma$.
Given a $1$-dimensional cone $\rho\in\gS$,
the closure of $O_{\rho}$ is a Weil divisor,
denoted by $D_{\rho}$.
The class group of $X$ is generated by the classes of the divisors $D_{\rho}$
corresponding to the $1$-dimensional cones in $\gS$.

\subsection{Polytopes and toric varieties}

Let $M_{\bbR}:=M\otimes_{\bbZ}\bbR\cong\bbR^n$.
A lattice polytope $P$ in $M_{\bbR}$ is
the convex hull of finitely many points in $M$.
The dimension of $P$ is the dimension of the affine span of $P$.
When $\dim P=\dim M_{\bbR}$,
we say that $P$ is full dimensional.

Let $P\subseteq M_{\bbR}$ be a full dimensional lattice polytope,
and let $P_1,...,P_m$ be the \textit{facets} of $P$,
i.e. codimension 1 faces of $P$.
For each facet $P_k$,
there exists a unique primitive lattice point $v_k\in N$
and a unique integer $c_k\in\bbZ$
such that
$$
P_k=\{u\in P\,|\,\langle u,v_k\rangle =-c_k\}
$$
and $\langle u,v_k\rangle\geq -c_k$ for all $u\in P$.

Define $\gS_P$ to be the complete fan
whose $1$-dimensional cones are exactly
those generated by $v_k$.
This fan $\gS_P$ is called the \textit{(inner) normal fan} of $P$.
The toric variety $X_{\gS_P}$ associated to $\gS_P$
is called the toric variety of $P$,
and denoted by $X_P$.
Denote by $D_k$
the divisor corresponding to the $1$-dimensional cone generated by $v_k$.
Then we may define a divisor on $X_P$ by $D_P:=\sum_{k=1}^mc_kD_k$.
Such a divisor $D_P$ is necessarily ample.

This process is reversible,
and there is a 1-to-1 correspondance
between full dimensional lattice polytope $P\subseteq M_{\bbR}$
and a pair $(X,D)$ of a complete toric variety $X$
together with an ample $T$-invariant divisor $D$ on $X$.


\subsection{Toric vector bundles}

A vector bundle $\pi:\mathcal{E}\to X$ over a toric variety $X=X_{\gS}$
is said to be toric (or equivariant) if there is a $T$-action on $\mathcal{E}$ such that $t\circ\pi=\pi\circ t$ for all $t\in T$.

Given a cone $\gs \in \gS$ and $u \in M$,
define the line bundle $\calL_u |_{U_\gs}$ over $U_{\gs}$ to be
the trivial line bundle $U_{\gs} \times k$
equipped with the $T$-action given by
$t.(x , z) := (t.x , \chi^u(t) \cdot z)$.
If $u , u' \in M$ satisfy $u - u' \in \gs^{\perp}$,
then $\chi^{u - u'}$ is a non-vanishing regular function on $U_{\gs}$
which gives an isomorphism $\calL_u |_{U_\gs} \cong \calL_{u'} |_{U_\gs}$.
In fact,
the group of toric line bundles on $U_{\gs}$ is isomorphic to $M_{\gs} := M / (M \cap \gs^{\perp})$.
Therefore,
we also write $\calL_{[u]} |_{U_\gs}$,
where $[u] \in M_{\gs}$ is the class of $u$.

Let $\calE\to X$ be a toric vector bundle of rank $r$.
Its restriction to an invariant open affine set $U_{\gs}$
splits into a direct sum of toric line bundles with trivial underlying line bundles \cite[Proposition 2.2]{Pay08};
i.e. we have
$\calE |_{U_{\gs}} \cong \bigoplus_{i = 1}^r \calL_{[u_i]} |_{U_\gs}$
for some $[u_i] \in M_{\gs}$.
Define the \textit{associated characters} of $\calE$ on $\gs$
to be the multiset $\bfu_{\calE}(\gs) \subset M_{\gs}$ of size $r$
that contains the $[u_i]$ showing up in the splitting.

\begin{exmp}[Associated characters of tangent bundles] \label{exmp:T_X}
	Let $X = X_{\gS}$ be an $n$-dimensional smooth projective toric variety,
	and consider its tangent bundle $\calT_X$.
	Fix a maximal cone $\gs \in \gS$.
	Since $X$ is smooth,
	the dual cone $\check{\gs}$ of $\gs$
	is generated by some $u_1 , ... , u_n \in M$ that form a $\bbZ$-basis of $M$.
	Denote by $x_1 , ... , x_n \in \gC(U_{\gs} , \calO_X)$ the coordinates on $U_{\gs} \cong k^n$ corresponding to $u_1 , ... , u_n$.
	Then $\big\{ \pdv{x_1} , ... , \pdv{x_n} \big\}$ is a local frame of $\calT_X$ on $U_{\gs}$.
	Each non-vanishing section $\pdv{x_i} \in \gC(U_{\gs} , \calT_X)$ naturally generates a toric line bundle on $U_{\gs}$ isomorphic to $\calL_{u_i}|_{U_{\gs}}$.
	Thus we have
	$
	\calT_X |_{U_{\gs}} \cong \bigoplus_{i = 1}^n \calL_{u_i} |_{U_\gs} \, ,
	$
	and hence the associated characters of $\calT_X$ on $\gs$ are $\bfu_{\calT_X}(\gs) = \{u_1 , ... , u_n\}$.
\end{exmp}

\subsection{Positivity of toric vector bundles} \label{sec:pos}

Let $X = X_{\gS}$ be a complete toric variety.
By an \textit{invariant curve} on $X$,
we mean a complete irreducible $1$-dimensional subvariety
that is invariant under the $T$-action.
Via the cone-orbit correspondence,
there is a one-to-one correspondence between the invariant curves and the codimension-$1$ cones;
every invariant curve is the closure of an $1$-dimensional orbit,
which corresponds to a codimension-$1$ cone in $\gS$.
For each codimension-$1$ cone $\tau \in \gS$,
denote the corresponding invariant curve by $C_{\tau}$.

The positivity of toric vector bundles can be checked on invariant curves according to the following result in \cite{HMP10}.

\begin{thm}\cite[Theorem 2.1]{HMP10} \label{thm:HMP}
	A toric vector bundle on a complete toric variety is ample (resp.\ nef)
	if and only if
	its restriction to every invariant curve is ample  (resp.\ nef).
\end{thm}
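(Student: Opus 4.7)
The ``only if'' direction is immediate, since ample (resp.\ nef) restricts to ample (resp.\ nef) on any subvariety. For the converse, I would pass to the projectivized bundle $\pi \colon \bbP\calE \to X$ and its tautological line bundle $\calO_{\bbP\calE}(1)$. By definition $\calE$ is ample (resp.\ nef) iff $\calO_{\bbP\calE}(1)$ is, and positivity of this $T$-equivariant line bundle on the projective variety $\bbP\calE$ can be tested by its intersection numbers with irreducible curves via the Kleiman/Nakai--Moishezon criteria.

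The essential toric input is that any irreducible curve $C' \subset \bbP\calE$ is numerically equivalent to a $T$-invariant 1-cycle. Concretely, I would fix a generic one-parameter subgroup $\gl \colon k^* \to T$ and take the flat limit of $\gl(t)\cdot C'$ inside $\bbA^1 \times \bbP\calE$ as $t \to 0$; the central fiber is $T$-invariant and numerically equivalent to $C'$ since $\gl(t)$ acts by automorphisms of $\bbP\calE$. It therefore suffices to verify $\calO_{\bbP\calE}(1) \cdot C' \geq 0$ (resp.\ $> 0$) for $T$-invariant irreducible curves $C' \subset \bbP\calE$. Any such $C'$ projects under $\pi$ either to a $T$-fixed point $x \in X$ or to an invariant curve $C_\tau$. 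In the first case $C' \subset \bbP(\calE_x) \cong \bbP^{r-1}$ and $\calO_{\bbP\calE}(1)|_{C'}$ is the restriction of $\calO_{\bbP^{r-1}}(1)$, which has strictly positive degree on every curve, so the inequality holds automatically. In the second case $C' \subset \bbP(\calE|_{C_\tau})$ and $\calO_{\bbP\calE}(1) \cdot C'$ is computed entirely from $\calE|_{C_\tau}$; the hypothesis that $\calE|_{C_\tau}$ is ample (resp.\ nef) makes $\calO_{\bbP(\calE|_{C_\tau})}(1)$ ample (resp.\ nef) and delivers the required positive (resp.\ nonnegative) intersection.

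The main obstacle is the reduction to $T$-invariant 1-cycles. For the nef statement the Bia\l ynicki--Birula style orbit-closure degeneration described above is clean and self-contained. The ample statement contains an additional subtlety: strict positivity on all $T$-invariant curves implies ampleness only once one knows that $\overline{NE}(\bbP\calE)$ is spanned, in a rational polyhedral fashion, by classes of $T$-invariant irreducible curves, so that strict positivity on a finite set of generators can be propagated uniformly. Establishing this finiteness — via the $T$-action, fiberwise weight decompositions over the $T$-fixed points, and the toric Mori cone of $X$ lifted to $\bbP\calE$ — is where I would expect to spend the most effort.
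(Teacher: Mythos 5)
This statement is quoted from \cite[Theorem 2.1]{HMP10} and the paper gives no proof of its own, so there is nothing internal to compare against; I will judge your sketch on its merits. The ``only if'' direction and your treatment of the \emph{nef} case are essentially correct and follow the standard line: degenerate an arbitrary irreducible curve $C'\subseteq\bbP\calE$ to an effective $T$-invariant $1$-cycle in the same numerical class (one caveat: the limit under a single one-parameter subgroup $\gl$ is only $\gl$-invariant, so you should iterate the degeneration over a basis of one-parameter subgroups of $T$ to get genuine $T$-invariance), note that each invariant component lies either in a fibre $\bbP(\calE_x)\cong\bbP^{r-1}$ over a fixed point, where $\calO_{\bbP\calE}(1)$ restricts to $\calO_{\bbP^{r-1}}(1)$, or in $\bbP(\calE|_{C_\tau})$ for an invariant curve $C_\tau$, where the hypothesis applies. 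Since nefness of a line bundle \emph{is} nonnegativity on all curves, this genuinely finishes the nef half.

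The ample half, however, is not finished, and you have located the hole yourself: strict positivity of $\calO_{\bbP\calE}(1)$ on every curve of $\bbP\calE$ does not imply ampleness (Mumford's example), and Kleiman's criterion is not even available until one knows $\bbP\calE$ is projective --- which is not given, since $X$ is only assumed complete. Deferring ``the main effort'' to showing that $\overline{NE}(\bbP\calE)$ is rational polyhedral and spanned by invariant curves means you have proved only the nef statement. One way to close the gap without ever analyzing the cone of $\bbP\calE$: the hypothesis gives $\deg\big(\det\calE|_{C_\tau}\big)=\sum_i a_i\geq r>0$ for every invariant curve, so $\det\calE$ is ample by the toric Kleiman criterion for \emph{line} bundles on complete toric varieties (strict convexity of the support function across every wall), which in particular makes $X$ projective; since there are only finitely many invariant curves and every splitting degree is at least $1$, one can pick $\ell\gg 0$ so that $\operatorname{Sym}^{\ell}\calE\otimes(\det\calE)^{-1}$ has nonnegative splitting type on every invariant curve, hence is nef by the case already proved; then $\operatorname{Sym}^{\ell}\calE$ is a nef bundle twisted by an ample line bundle, hence ample, and ampleness of $\operatorname{Sym}^{\ell}\calE$ implies ampleness of $\calE$ via the Veronese embedding $\bbP\calE\hookrightarrow\bbP(\operatorname{Sym}^{\ell}\calE)$. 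Some such additional input is indispensable; as written, your ample direction is an announcement rather than an argument.
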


Note that every invariant curve is a $\bbP^1$.
By Birkhoff-Grothendieck theorem,
every vector bundle on $\bbP^1$ splits into a direct sum of line bundles.
Hence,
the positivity of vector bundles on $\bbP^1$ is well understood,
namely $\bigoplus_{i = 1}^r \calO_{\bbP^1} (a_i)$ is ample (resp.\ nef)
if and only if every $a_i$ is positive (resp.\ non-negative).
It is common to call the $r$-tuple (or multiset) $(a_i)_{i = 1}^r$ the \textit{splitting type} of the vector bundle.

Fix a codimension-$1$ cone $\tau$,
and let $\gs , \gs'$ be the two maximal cones containing $\tau$.
Given $u , u' \in M$ satisfying $u - u' \in \tau^{\perp}$,
define a toric line bundle $\calL_{u , u'}$ on $U_{\gs} \cup U_{\gs'}$
by glueing the toric line bundles $\calL_{u}|_{U_{\gs}}$ and $\calL_{u'}|_{U_{\gs'}}$
with the transition function $\chi^{u' - u}$.
Since the invariant curve $C_{\tau}$ is contained in $U_{\gs} \cup U_{\gs'}$,
we may restrict $\calL_{u , u'}$ to get a toric line bundle $\calL_{u , u'}|_{C_{\tau}}$ on $C_{\tau}$.

\begin{prop}
	\cite[Corollary 5.5 and 5.10]{HMP10} \label{prop:split}
	Let $X$ be a complete toric variety.
	Any toric vector bundle $\mathcal{E}|_{C_{\tau}}$ on the invariant curve $C_{\tau}$ splits equivariantly as a sum of line bundles
	$$
	\mathcal{E}|_{C_{\tau}}=\bigoplus_{i=1}^r\mathcal{L}_{u_i,u_i'}|_{C_{\tau}}.
	$$
	The splitting is unique up to reordering.
\end{prop}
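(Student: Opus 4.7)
The plan is to combine Payne's local splitting theorem with the structure of $C_{\tau}$ as a $T$-variety to obtain a global equivariant splitting. Let $\gs$ and $\gs'$ be the two maximal cones containing $\tau$. Applying Payne's theorem to each of $U_{\gs}$ and $U_{\gs'}$ gives equivariant splittings $\calE|_{U_{\gs}} \cong \bigoplus_{i=1}^r \calL_{u_i}|_{U_{\gs}}$ and $\calE|_{U_{\gs'}} \cong \bigoplus_{j=1}^r \calL_{u_j'}|_{U_{\gs'}}$, where the $[u_i]$ (resp.\ $[u_j']$) are the associated characters of $\calE$ on $\gs$ (resp.\ $\gs'$). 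Restricting to $C_{\tau}$, which is covered by the two affine opens $C_{\tau} \cap U_{\gs}$ and $C_{\tau} \cap U_{\gs'}$, yields equivariant splittings on each piece.

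The next step is to glue these local splittings. I would examine the overlap $U_{\gs} \cap U_{\gs'} \cap C_{\tau}$, which equals $U_{\tau} \cap C_{\tau} \cong k^*$, a single $T$-orbit with stabilizer $T_{\tau}$. Any $T$-equivariant vector bundle on this orbit is of the form $T \times_{T_{\tau}} V$ for a $T_{\tau}$-representation $V$, and $V$ decomposes canonically (up to reordering) into $1$-dimensional weight spaces for $T_{\tau}$, labeled by elements of $M_{\tau} := M / (M \cap \tau^{\perp})$. The restrictions of both local splittings to the overlap must coincide with this canonical decomposition. Hence, after reordering, the $i$-th summand $\calL_{u_i}|_{U_{\gs}}$ and the $i$-th summand $\calL_{u_i'}|_{U_{\gs'}}$ project to the same weight in $M_{\tau}$, so $u_i - u_i' \in \tau^{\perp}$, and the two summands glue to an equivariant line bundle $\calL_{u_i, u_i'}|_{C_{\tau}}$ as defined in the preliminaries. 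Taking the direct sum gives the desired global equivariant splitting.

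For uniqueness up to reordering, I would use that at each of the two $T$-fixed points $x_{\gs}, x_{\gs'} \in C_{\tau}$ the fiber of $\calE$ decomposes uniquely into $1$-dimensional $T$-weight spaces; any equivariant sub-line bundle summand contributes a well-defined weight at each fixed point, and a $T$-equivariant line bundle on $C_{\tau} \cong \bbP^1$ is determined up to isomorphism by this pair of weights. The main technical obstacle is the gluing step in the middle paragraph: it relies crucially on the canonicity of the weight-space decomposition for equivariant bundles on a single torus orbit, which is what forces the two Payne splittings to be compatible after reordering. Once this canonicity is established, the rest is a matter of translating it into the combinatorial language of the $\calL_{u, u'}$ notation.
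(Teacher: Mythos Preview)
First, note that the paper itself does not supply a proof of this proposition; it is quoted from \cite[Corollary~5.5 and 5.10]{HMP10} and used as a black box, so there is no in-paper argument to compare yours against.

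Your plan is on the right track but has a real gap in the gluing step. You assert that the $T_{\tau}$-representation $V$ on the open orbit ``decomposes canonically (up to reordering) into $1$-dimensional weight spaces,'' and that this forces the two local Payne splittings to agree on $O_{\tau}$. But the only thing that is canonical is the decomposition of $V$ into $T_{\tau}$-\emph{isotypic} components, and these need not be $1$-dimensional: a character $[w]\in M_{\tau}$ can occur among the images of $\bfu_{\calE}(\gs)$ in $M_{\tau}$ with multiplicity $d>1$, and then the $[w]$-component is a $d$-dimensional space with no preferred splitting into lines. The Payne splitting over $U_{\gs}$ hands you one basis of this $d$-dimensional space, the Payne splitting over $U_{\gs'}$ hands you another, and nothing in your argument forces these bases to coincide on the overlap. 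Knowing that the multisets of $T_{\tau}$-weights agree tells you only that a bijection with $u_i-u_i'\in\tau^{\perp}$ \emph{exists}; it does not tell you that the corresponding line subbundles of $\calE|_{C_{\tau}}$ actually glue.

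What is missing is an argument \emph{inside} each $T_{\tau}$-isotypic component. After twisting away the constant $T_{\tau}$-character, such a component is an equivariant bundle on $C_{\tau}\cong\bbP^1$ for the one-dimensional quotient torus $T/T_{\tau}\cong k^{*}$, and one still has to show that every $k^{*}$-equivariant bundle on $\bbP^1$ splits equivariantly into line bundles. That is essentially the content of \cite[Corollary~5.10]{HMP10}; one standard route is Klyachko's description, choosing a basis of the generic fibre simultaneously adapted to the two $\bbZ$-filtrations attached to the two torus-fixed points. Once that ingredient is in place, the rest of your outline (including the uniqueness sketch via fixed-point weights) goes through.
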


Combining this with the following lemma that computes the underlying line bundle of $\calL_{u , u'}|_{C_{\tau}}$,
one gets the splitting type of $\calE|_{C_{\tau}}$.

\begin{lem} \cite[Example 5.1]{HMP10} \label{lem:O(m)}
	Let $u_0$ be the generator of $M \cap \tau^{\perp} \cong \bbZ$
	that is positive on $\gs$,
	and let $m$ be the integer such that $u - u' = m u_0$.
	Then, the underlying line bundle of
	$\calL_{u , u'}|_{C_{\tau}}$ is isomorphic to $\calO_{\bbP^1} (m)$.
\end{lem}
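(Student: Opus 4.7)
The plan is a direct unpacking of definitions: realize $C_\tau$ as $\bbP^1$ via its two standard affine charts, compute the transition function of $\calL_{u,u'}|_{C_\tau}$ in those coordinates, and recognize it as the transition function of $\calO_{\bbP^1}(m)$.

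First I would set up the two charts of $C_\tau$. By the cone-orbit correspondence and the description of orbit closures, $C_\tau \cap U_\gs = \Spec k[\gs^\vee \cap \tau^\perp \cap M]$. Because $\tau$ is a facet of $\gs$, the face $\gs^\vee \cap \tau^\perp$ of $\gs^\vee$ is one-dimensional, and the hypothesis that $u_0$ is positive on $\gs$ forces this face to be the ray $\bbR_{\geq 0} \cdot u_0$. Hence $C_\tau \cap U_\gs \cong \bbA^1 = \Spec k[z]$ with $z := \chi^{u_0}|_{C_\tau}$. Symmetrically $C_\tau \cap U_{\gs'} \cong \bbA^1 = \Spec k[w]$ with $w := \chi^{-u_0}|_{C_\tau}$, and on the overlap $O_\tau$ we have $zw = \chi^0 = 1$, presenting $C_\tau$ as $\bbP^1$ in the standard atlas with coordinates $z$ and $w$. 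Next I would track the transition function: the trivializing sections $s_\gs$ of $\calL_u|_{U_\gs}$ and $s_{\gs'}$ of $\calL_{u'}|_{U_{\gs'}}$ are glued by $s_\gs = \chi^{u'-u}\, s_{\gs'}$ on $U_\gs \cap U_{\gs'}$, and restricting to $C_\tau$ and substituting $u - u' = m u_0$ gives $s_\gs = z^{-m}\, s_{\gs'}$. Since this is exactly the transition function of $\calO_{\bbP^1}(m)$ in the standard atlas (with $w = 1/z$), the underlying line bundle of $\calL_{u,u'}|_{C_\tau}$ is $\calO_{\bbP^1}(m)$.

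I do not expect a serious obstacle; the whole content of the lemma is a definitional unwinding. The one place requiring care is sign bookkeeping: one must be consistent about whether ``transition function $\chi^{u'-u}$'' means $s_\gs = \chi^{u'-u} s_{\gs'}$ or its reciprocal, and match this convention with the chosen direction for the primitive generator $u_0$ so that the standard transition function of $\calO_{\bbP^1}(m)$ comes out to $z^{-m}$. These choices are what make the sign in the statement come out to $+m$ rather than $-m$.
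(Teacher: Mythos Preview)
The paper does not give its own proof of this lemma; it merely cites \cite[Example 5.1]{HMP10}. Your proposal is the standard direct computation and is correct: identify $C_\tau\cap U_\gs$ and $C_\tau\cap U_{\gs'}$ with the two affine charts of $\bbP^1$ via $z=\chi^{u_0}$ and $w=\chi^{-u_0}$, then read off the transition function. This is precisely the argument one finds in the cited reference, and your caveat about sign bookkeeping is exactly the right thing to flag.
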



\section{Restricting $\calT_X$ to invariant curves}
{
Let $X = X_{\gS}$ be a smooth complete toric variety of dimension $n$.
In this section,
we consider the restrictions of the tangent bundle $\calT_X$ to the invariant curves.
The goal is to get the splitting types 
in terms of the combinatorial data of the fan $\gS$ of $X$.
This has in fact been done in \cite[Example 5.1 and 5.2]{DRJS18} and \cite[Theorem 2]{Sch18}.
We repeat the calculation for the convenience of the readers.
}

{
Fix an $(n-1)$-dimensional cone $\tau \in \gS$.
Let $\gs , \gs' \in \gS(n)$ be the two maximal cones containing $\tau$.
Let $v_1 , ... , v_{n - 1} , v_n , v'_n \in N$ be primitive vectors
such that $\tau$ is generated by $\{ v_1 , ... , v_{n - 1} \}$,
$\gs$ is generated by $\{ v_1 , ... , v_{n - 1} , v_n \}$,
and $\gs'$ is generated by $\{ v_1 , ... , v_{n - 1} , v'_n \}$.
There are unique $u_i , u'_i \in M$ ($i = 1 , ... , n$)
such that
$\pair{u_i}{v_i} = \pair{u'_i}{v'_i} = 1$ for all $i$
and $\pair{u_i}{v_j} = \pair{u'_i}{v'_j} = 0$ for all $i \neq j$,
where we define $v'_i = v_i$ for $i = 1 , ... , n - 1$.
The dual cones $\check{\gs}$ and $\check{\gs}'$
are generated by $\{ u_1 , ... , u_n \}$ and $\{ u'_1 , ... , u'_n \}$, respectively.

By Example \ref{exmp:T_X},
the associated characters of $\calT_X$ on $\gs$ and $\gs'$ are given by
$$
\bfu_{\calT_X}(\gs) = \{ u_1 , ... , u_n \}
\; , \;
\bfu_{\calT_X}(\gs') = \{ u'_1 , ... , u'_n \}
\; .
$$
Following Section \ref{sec:pos},
let $C_{\tau}$ be the invariant curve corresponding to $\tau$.
The splitting of $\calT_X|_{C_{\tau}}$ as in Proposition \ref{prop:split} is easy to get by the following fact.
}

\begin{lem} \label{lem}
	We have $u_i - u'_i \in \tau^{\perp}$
	for all $i = 1 , ... , n$,
	and $u_i - u'_j \notin \tau^{\perp}$
	for all $i \neq j$.
\end{lem}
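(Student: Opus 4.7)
The plan is a direct unpacking of the definitions of the two dual bases. Recall that $\tau^\perp$ is characterized as the set of $u \in M_\bbR$ with $\pair{u}{v_k} = 0$ for $k = 1 , \ldots , n-1$, because $\tau$ is generated precisely by $v_1, \ldots, v_{n-1}$. The crucial observation is that $v'_k = v_k$ for $k \leq n-1$ by construction, so both $\{u_i\}$ and $\{u'_i\}$ are dual to the same vectors $v_1 , \ldots , v_{n-1}$ on the relevant indices; specifically, for every $i \in \{1, \ldots, n\}$ and every $k \in \{1, \ldots, n-1\}$, one has
\[
\pair{u_i}{v_k} = \gd_{ik}
\quad\text{and}\quad
\pair{u'_i}{v_k} = \pair{u'_i}{v'_k} = \gd_{ik}.
\]

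For the first claim, I would simply subtract: for each $i$ and each $k \leq n-1$, $\pair{u_i - u'_i}{v_k} = \gd_{ik} - \gd_{ik} = 0$, so $u_i - u'_i \in \tau^\perp$. (The case $i = n$ is included, where both terms are $0$ individually since $u_n, u'_n \in \tau^\perp$ already.)

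For the second claim, I would exhibit, for each pair $i \neq j$, an explicit generator of $\tau$ against which $u_i - u'_j$ pairs nontrivially. For any $k \leq n-1$ we have $\pair{u_i - u'_j}{v_k} = \gd_{ik} - \gd_{jk}$. Since $i \neq j$, at least one of $i, j$ lies in $\{1, \ldots, n-1\}$ (they cannot both equal $n$). If $i \leq n-1$, take $k = i$ to get value $1$; otherwise $j \leq n-1$, and taking $k = j$ gives value $-1$. Either way the pairing is nonzero, so $u_i - u'_j \notin \tau^\perp$.

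The main obstacle is essentially notational book-keeping: one has to remember that the duality relations $\pair{u'_i}{v'_k} = \gd_{ik}$ are stated with respect to the basis $\{v'_k\}$, and the identification $v'_k = v_k$ for $k \leq n-1$ is what lets both sets of pairings be evaluated against the generators of $\tau$. Once that is in place, the statement reduces to the trivial Kronecker-delta computation above.
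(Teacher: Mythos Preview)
Your argument is correct and is essentially identical to the paper's own proof: both verify $u_i - u'_i \in \tau^{\perp}$ by pairing against each generator $v_k$ of $\tau$ using $v'_k = v_k$ for $k \leq n-1$, and both handle $i \neq j$ by observing that at least one of $i,j$ is $\leq n-1$ and pairing against the corresponding $v_i$ or $v_j$. The only difference is that you spell out the Kronecker-delta bookkeeping explicitly, whereas the paper compresses it into a single line.
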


\begin{proof}
	The first part follows from
	$\pair{u_i - u'_i}{v_{i'}} = 0$ for all $i' = 1 , ... , n - 1$,
	and the second part follows from
	$\pair{u_i - u'_j}{v_i} = -\pair{u_i - u'_j}{v_j} = 1$,
	where at least one of $i , j$ is not $n$.
\end{proof}

\begin{defn}
	Define $a_i \in \bbZ$ ($i = 1 , ... , n$) to be the integers satisfying $u_i = u'_i + a_i u_n$.
	Such integers exist since $u_n$ is a primitive generator of $\tau^{\perp} \cap M \cong \bbZ$.
	Note that $u'_n = - u_n$ so that $a_n = 2$.
\end{defn}

\begin{prop} \label{prop3}
	On the invariant curve $C_{\tau}$,
	the restriction $\calT_X |_{C_{\tau}}$ of the tangent bundle (as a toric vector bundle) splits into the following direct sum of toric line bundles
	$$
	\calT_X |_{C_{\tau}}
	\cong
	\bigoplus_{i = 1}^n \calL_{u_i , u'_i}|_{C_{\tau}} \, .
	$$
	In particular,
	we have the following splitting of $\calT_X |_{C_{\tau}}$ as a vector bundle
	$$
	\calT_X |_{C_{\tau}}
	\cong
	\bigoplus_{i = 1}^n \calO_{\bbP^1} (a_i) \, .
	$$
\end{prop}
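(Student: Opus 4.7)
The plan is to apply Proposition \ref{prop:split} to obtain an equivariant splitting of $\calT_X|_{C_\tau}$ and then identify the summands by matching characters from the two maximal cones $\gs$ and $\gs'$ adjacent to $\tau$. By Example \ref{exmp:T_X}, the associated characters of $\calT_X$ on these cones are $\bfu_{\calT_X}(\gs) = \{u_1,\ldots,u_n\}$ and $\bfu_{\calT_X}(\gs') = \{u'_1,\ldots,u'_n\}$, so an equivariant splitting of $\calT_X|_{C_\tau}$ must take the form $\bigoplus_{i=1}^n \calL_{u_i,u'_{\sigma(i)}}|_{C_\tau}$ for some bijection $\sigma$ of $\{1,\ldots,n\}$.

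Next I would show that $\sigma$ is forced to be the identity. For $\calL_{u_i,u'_{\sigma(i)}}$ to even be defined on $U_\gs \cup U_{\gs'}$, we need $u_i - u'_{\sigma(i)} \in \tau^\perp$, and this is precisely what Lemma \ref{lem} controls: it gives $u_i - u'_i \in \tau^\perp$ while $u_i - u'_j \notin \tau^\perp$ for $i \neq j$. This forces $\sigma(i) = i$ for every $i$, yielding the first splitting
$$
\calT_X|_{C_\tau} \;\cong\; \bigoplus_{i=1}^n \calL_{u_i,u'_i}|_{C_\tau}.
$$

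To upgrade this to the splitting as an abstract vector bundle, I would invoke Lemma \ref{lem:O(m)}. This requires identifying the positive generator $u_0$ of $\tau^\perp \cap M$ on $\gs$. Since $\{u_1,\ldots,u_n\}$ is the $\bbZ$-basis of $M$ dual to $\{v_1,\ldots,v_n\}$, the element $u_n$ lies in $\tau^\perp$, is primitive, and satisfies $\pair{u_n}{v_n} = 1 > 0$, hence is the required positive generator. By definition of $a_i$ we have $u_i - u'_i = a_i u_n$, so Lemma \ref{lem:O(m)} identifies the underlying line bundle of $\calL_{u_i,u'_i}|_{C_\tau}$ with $\calO_{\bbP^1}(a_i)$, giving the second claimed splitting.

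There is no real obstacle here; the only thing to be careful about is the sign convention in Lemma \ref{lem:O(m)}, which is why I would explicitly verify that $u_n$ (rather than $-u_n = u'_n$) is the generator positive on $\gs$ before writing $u_i - u'_i = a_i u_n$ and reading off the integer $a_i$ as the degree.
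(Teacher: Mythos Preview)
Your proposal is correct and follows essentially the same approach as the paper: apply Proposition~\ref{prop:split} to obtain an equivariant splitting, use Lemma~\ref{lem} to force the induced bijection between $\bfu_{\calT_X}(\gs)$ and $\bfu_{\calT_X}(\gs')$ to be the identity, and then read off the degrees via Lemma~\ref{lem:O(m)}. Your explicit verification that $u_n$ is the generator of $\tau^\perp \cap M$ positive on $\gs$ is a nice touch that the paper leaves implicit.
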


\begin{proof}
	By Proposition \ref{prop:split},
	$\calT_X |_{C_{\tau}}$ splits into a direct sum of toric line bundles of the form $\calL_{u,u'}|_{C_{\tau}}$.
	This gives a bijection
	$\gi : \bfu_{\calE}(\gs) \to \bfu_{\calE}(\gs')$
	mapping $u$ to $u'$ whenever $\calL_{u,u'}|_{C_{\tau}}$ shows up in the splitting.
	Note that $u_i - \gi (u_i) \in \tau^{\perp}$ by the definiton of $\calL_{u,u'}$.
	Then Lemma \ref{lem} implies that we must have $\gi (u_i) = u'_i$ for all $i$,
	hence the splitting in the first part.
	
	The second part follows directly from the first part together with Lemma \ref{lem:O(m)}.
\end{proof}

\begin{rmk}
	The integers $a_i$ are the same as the integers $b_i$
	that show up in the ``wall relation''
	$$
	b_1 v_1 + \cdots + b_{n - 1} v_{n - 1} + v_n + v'_n = 0 \, ,
	$$
	mentioned in \cite{Sch18} and \cite{DRJS18}.
	Indeed we have $b_i = - \pair{u_i}{v'_n} = a_i$ for all $i$.
\end{rmk}

\begin{exmp} \label{exmp:fans}
	For each of the following toric surfaces $X$,
	we fix a $1$-dimensional cone $\tau$ in its fan
	(as shown in Figure \ref{fig:fans})
	and compute the splitting type of $\calT_X|_{C_{\tau}}$.
	\begin{enumerate}
		\item[(1)]
		$X = \bbP^2$.
		The dual cones of the maximal cones containing $\tau$ are given by
		$\check{\sigma} = \operatorname{Cone} \{(-1 , 0) , (-1 , 1)\}$
		and
		$\check{\sigma} = \operatorname{Cone} \{(0 , -1) , (1 , -1)\}$.
		Therefore we get $\calT_X|_{C_{\tau}} \cong \calO_{\bbP^1} (1) \oplus \calO_{\bbP^1} (2)$.
		In fact,
		the restrictions of $\calT_X$ to the other two invariant curves have the same splitting type,
		so $\calT_X$ is ample by Proposition \ref{thm:HMP}.
		
		\item[(2)]
		$X = \bbP^1 \times \bbP^1$.
		The dual cones of the maximal cones containing $\tau$ are given by
		$\check{\sigma} = \operatorname{Cone} \{(-1 , 0) , (0 , 1)\}$
		and
		$\check{\sigma} = \operatorname{Cone} \{(-1 , 0) , (0 , -1)\}$.
		Therefore we get $\calT_X|_{C_{\tau}} \cong \calO_{\bbP^1} (0) \oplus \calO_{\bbP^1} (2)$.
		In fact,
		the restrictions of $\calT_X$ to the other three invariant curves have the same splitting type,
		so $\calT_X$ is nef (but not ample) by Proposition \ref{thm:HMP}.
		
		\item[(3)]
		Let $X$ be the Hirzebruch surface $\bbF_1$,
		which is isomorphic to $\bbP^2$ blown up at one point.
		The dual cones of the maximal cones containing $\tau$ are given by
		$\check{\sigma} = \operatorname{Cone} \{(-1 , 0) , (0 , 1)\}$
		and
		$\check{\sigma} = \operatorname{Cone} \{(-1 , 1) , (0 , -1)\}$.
		Therefore we get $\calT_X|_{C_{\tau}} \cong \calO_{\bbP^1} (-1) \oplus \calO_{\bbP^1} (2)$,
		and hence $\calT_X$ is not nef by Proposition \ref{thm:HMP}.
	\end{enumerate}
\end{exmp}

\setcounter{figure}{\value{thm}}
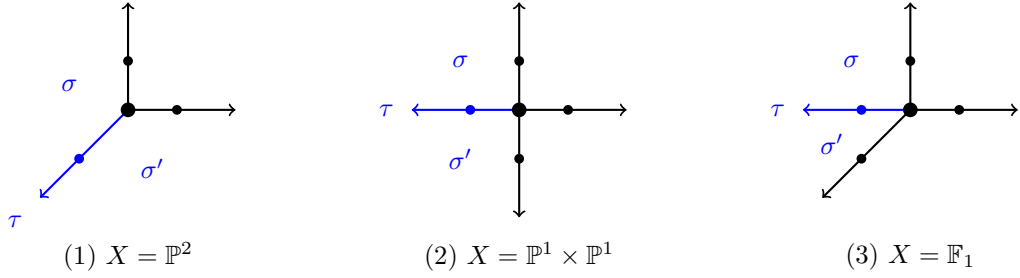
\begin{figure}[h!]
	\begin{tikzpicture}[scale=0.65]
		\draw (-8 , -3) node {(1) $X = \bbP^2$} ;
		\draw[thick , ->] (-8 , 0) -- (-5.8 , 0) ;
		\fill (-7 , 0) circle (1mm) ;
		\draw[thick , ->] (-8 , 0) -- (-8 , 2.2) ;
		\fill (-8 , 1) circle (1mm) ;
		\draw[thick , blue , ->] (-8 , 0) -- (-9.8 , -1.8) node[below left=3pt] {$\tau$} ;
		\fill[blue] (-9 , -1) circle (1mm) ;
		\draw[blue] (-9.2 , 0.5) node {$\gs$} ;
		\draw[blue] (-7.5 , -1.2) node {$\gs'$} ;
		\fill (-8 , 0) circle (1.5mm) ;
		
		\draw (0 , -3) node {(2) $X = \bbP^1 \times \bbP^1$} ;
		\draw[thick , ->] (0 , 0) -- (2.2 , 0) ;
		\fill (1 , 0) circle (1mm) ;
		\draw[thick , ->] (0 , 0) -- (0 , 2.2) ;
		\fill (0 , 1) circle (1mm) ;
		\draw[thick , ->] (0 , 0) -- (0 , -2.2) ;
		\fill (0 , -1) circle (1mm) ;
		\draw[thick , blue , ->] (0 , 0) -- (-2.2 , 0) node[left=3pt] {$\tau$} ;
		\fill[blue] (-1 , 0) circle (1mm) ;
		\draw[blue] (-1.2 , 1) node {$\gs$} ;
		\draw[blue] (-1.2 , -1) node {$\gs'$} ;
		\fill (0 , 0) circle (1.5mm) ;

		\draw (8 , -3) node {(3) $X = \bbF_1$} ;
		\draw[thick , ->] (8 , 0) -- (10.2 , 0) ;
		\fill (9 , 0) circle (1mm) ;
		\draw[thick , ->] (8 , 0) -- (8 , 2.2) ;
		\fill (8 , 1) circle (1mm) ;
		\draw[thick , ->] (8 , 0) -- (6.2 , -1.8) ;
		\fill (7 , -1) circle (1mm) ;
		\draw[thick , blue , ->] (8 , 0) -- (5.8 , 0) node[left=3pt] {$\tau$} ;
		\fill[blue] (7 , 0) circle (1mm) ;
		\draw[blue] (6.8 , 1) node {$\gs$} ;
		\draw[blue] (6.4 , -0.7) node {$\gs'$} ;
		\fill (8 , 0) circle (1.5mm) ;

	\end{tikzpicture}
	\caption{Fans of toric surfaces}
	\label{fig:fans}
\end{figure}

\section{Polytopes and ampleness of the tangent bundle}

Let $X = X_{\gS}$, $\calT_X$, $\tau$, $\gs$, $\gs'$, $u_i$, $u'_i$, $a_i$ be as in the previous section.

Fix an ample $T$-invariant divisor $D$,
and let $P = P(X , D)$ be the corresponding polytope.
Note that $X$ and $\gS$ are simplicial as they are smooth;
in particular,
every maximal cone in $\gS$ has exactly $n$ faces of dimension $(n - 1)$,
and every $(n - 1)$-dimensional cone has exactly $(n - 1)$ faces of dimension $(n - 2)$.
This implies that there are exactly $n$ edges emanating from every vertex of $P$
and that every edge of $P$ is contained in exactly $(n - 1)$ faces of dimension $2$.

Let $p_{\gs} \in P$ be the vertex corresponding to the maximal cone $\gs$.
Denote by $P - p_{\gs}$ the translation of $P$ by $- p_{\gs}$.
The cone generated by $P - p_{\gs}$ is given by
$\{ u \in M_{\bbR} \, | \, \pair{u}{v_i} \geq 0 \text{ for all } i = 1 , ... , n \}$,
which is exactly the dual cone $\check{\gs}$ of $\gs$.
Thus,
the $n$ edges of $P$ emanating from $p_{\gs}$ are parallel to $u_1 , ... , u_n$.
Similarly the $n$ edges emanating from the vertex $p_{\gs'}$ corresponding to $\gs'$ are parallel to $u'_1 , ... , u'_n$.

Recall that the $u_i$ and $u'_i$ satisfy
$u'_i = u_i - a_i u_n$ for all $i = 1 , ... , n - 1$
and $u'_n = - u_n$.
Since $\gs$ and $\gs'$ contain the $(n - 1)$-dimensional cone $\tau$ as a common face,
the convex hull of $\overline{p_{\gs},p_{\gs'}}$ of $p_{\gs}$ and $p_{\gs'}$ is an edge of $P$;
it corresponds to $\tau$ and is parallel to $u_n$ and $u'_n$.
Fix a $j \in \{ 1 , ... , n - 1 \}$.
Consider the points $p_{\gs} + u_j , p_{\gs'} + u'_j \in M$.
The point $p_{\gs} + u_j$ is on an edge emanating from $p_{\gs}$,
and $p_{\gs'} + u'_j$ is on an edge emanating from $p_{\gs'}$.
In addition,
since $(p_{\gs} + u_j) - (p_{\gs'} + u'_j) = (p_{\gs} - p_{\gs'}) + m_j u_n$,
$\overline{p_{\gs} + u_j , p_{\gs'} + u'_j}$ is parallel to $\overline{p_{\gs},p_{\gs'}}$.
Thus,
the four points $p_{\gs},p_{\gs'},p_{\gs} + u_i,p_{\gs'} + u'_i$
are contained in a common $2$-dimensional face $A_i \subseteq P$.
In fact,
$A_i$ is the $2$-dimensional face of $P$
corresponding to the $(n - 2)$-dimensional cone
$\tau \cap (u_i)^{\perp} = \tau \cap (u'_i)^{\perp}$.

Denote the angles at $p_{\sigma}$ and $p_{\sigma'}$ on $A_j$ by $\gth (p_{\gs} , A_j)$ and $\gth (p_{\gs'} , A_j)$, respectively.
Their sum is related to the integer $a_j$ in the following way.

\begin{prop} \label{prop4}
	The sum $\theta (p_{\sigma} , A_j) + \theta (p_{\sigma'} , A_j)$ is smaller than $\pi$ if and only if $a_j > 0$, equal to $\pi$ if and only if $a_j = 0$, and greater than $\pi$ if and only if $a_j < 0$.
\end{prop}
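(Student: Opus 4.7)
My plan is to work entirely in the $2$-dimensional affine plane containing the face $A_j$ and reduce the statement to an elementary planar-geometry fact using the relation $u'_j = u_j - a_j u_n$.

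First I note that, since $u'_j - u_j = -a_j u_n$, the vectors $u_j$, $u_n$, $u'_j$ all lie in $\mathrm{span}_{\bbR}(u_n, u_j) \subseteq M_{\bbR}$. Hence $A_j$ is contained in the affine plane $p_\gs + \mathrm{span}_{\bbR}(u_n, u_j)$. The two edges of $A_j$ meeting $p_\gs$ have directions $u_n$ (towards $p_{\gs'}$) and $u_j$; the two edges meeting $p_{\gs'}$ have directions $-u_n$ (towards $p_\gs$) and $u'_j$. Since $A_j$ is a convex polygon, it lies in a closed half-plane bounded by the line through $p_\gs$ and $p_{\gs'}$, and the two outgoing rays $p_\gs + \bbR_{\geq 0}\, u_j$ and $p_{\gs'} + \bbR_{\geq 0}\, u'_j$ both enter this half-plane.

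I would then fix a Euclidean inner product on the $2$-plane, choosing coordinates so that $u_n$ is a positive multiple of $(1,0)$ and $A_j$ sits in $\{y \geq 0\}$. Writing $u_j = (x, y)$, linear independence of $u_j$ and $u_n$ forces $y > 0$; then $u'_j = (x - a_j c, y)$ where $c = |u_n| > 0$. Let $\ga, \ga' \in (0, \pi)$ be the polar angles of $u_j$ and $u'_j$. Reading the definition of a vertex angle from the outgoing edge directions at each vertex gives
\[
\gth(p_\gs, A_j) = \ga, \qquad \gth(p_{\gs'}, A_j) = \pi - \ga',
\]
so $\gth(p_\gs, A_j) + \gth(p_{\gs'}, A_j) = \pi + (\ga - \ga')$.

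The conclusion is then immediate from the elementary fact that on a horizontal line $\{y = y_0\}$ with $y_0 > 0$, the polar angle is a strictly decreasing function of the $x$-coordinate. Since $u'_j$ and $u_j$ share the same $y$-coordinate, and the $x$-coordinate of $u'_j$ is smaller (resp.\ equal, larger) than that of $u_j$ iff $a_j > 0$ (resp.\ $a_j = 0$, $a_j < 0$), we obtain $\ga' > \ga$ (resp.\ $=$, $<$) iff $a_j > 0$ (resp.\ $=$, $<$), which is exactly the claim. The only mildly subtle point is the convexity argument placing both outgoing edges in the same half-plane, so that both polar angles lie in $(0, \pi)$ and the vertex-angle formulas above are valid; once this planar setup is in place, the rest is a one-line calculation and there is no serious obstacle.
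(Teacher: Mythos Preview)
Your proposal is correct and follows essentially the same approach as the paper: both arguments work in the $2$-plane containing $A_j$, use the relation $u'_j = u_j - a_j u_n$ to compare the directions of the two edges leaving $p_\gs$ and $p_{\gs'}$, and read off the sign of $\gth(p_\gs,A_j)+\gth(p_{\gs'},A_j)-\pi$ from the sign of $a_j$. The only cosmetic difference is that the paper phrases the planar step as a trapezoid side-length comparison (the side $\overline{p_\gs+u_j,\,p_{\gs'}+u'_j}$ is longer, equal, or shorter than $\overline{p_\gs,p_{\gs'}}$ according to the sign of $-a_j$), whereas you phrase it via monotonicity of the polar angle; these are the same observation.
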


\begin{proof}
	Suppose $a_j < 0$.
	Consider the quadrilateral with vertices
	$p_{\gs} , p_{\gs'} , p_{\gs'} + u'_j , p_{\gs} + u_j$.
	It is a trapezoid with the edges $\overline{p_{\gs} + u_j , p_{\gs'} + u'_j}$ and $\overline{p_{\sigma},p_{\sigma'}}$
	parallel to each other.
	Since
	$$
	\big( (p_{\gs'} + u'_j) - (p_{\gs} + u_j) \big) - (p_{\gs'} - p_{\gs}) = - a_j u_1 ,
	$$
	the edge $\overline{p_{\sigma}+u_i,p_{\sigma'}+u_i'}$ is longer than $\overline{p_{\sigma},p_{\sigma'}}$,
	implying $\gth(p_{\gs} , A_j) + \gth(p_{\gs'} , A_j) > \pi$.
	
	The other two cases are similar.
\end{proof}

\begin{rmk}
	Although the angles $\gth (p_{\gs} , A_j) , \gth (p_{\gs'} , A_j)$ themselves are not invariant under a change of bases of $M$,
	whether their sum is smaller than, equal to, or greater than $\pi$ is.
\end{rmk}

\begin{exmp}
	In Figure \ref{fig:polytopes} are polytopes $P (X , -K_X)$ corresponding to the toric surfaces $X$ in Example \ref{exmp:fans} together with their anticanonical line bundles $-K_X$.
	The cones $\tau , \gs , \gs'$ are the same as in Example \ref{exmp:fans}.
	\begin{enumerate}
		\item[(1)]
		$X = \bbP^2$.
		Recall $\calT_X|_{C_{\tau}} \cong \calO_{\bbP^1} (1) \oplus \calO_{\bbP^1} (2)$ so that $a_1 = 1 > 0$.
		Here we see $\theta (p_{\sigma} , P) + \theta (p_{\sigma'} , P) < \pi$.
		
		\item[(2)]
		$X = \bbP^1 \times \bbP^1$.
		Recall $\calT_X|_{C_{\tau}} \cong \calO_{\bbP^1} (0) \oplus \calO_{\bbP^1} (2)$ so that $a_1 = 0$.
		Here we see $\theta (p_{\sigma} , P) + \theta (p_{\sigma'} , P) = \pi$.
		
		\item[(3)]
		$X = \bbF_1$.
		Recall $\calT_X|_{C_{\tau}} \cong \calO_{\bbP^1} (-1) \oplus \calO_{\bbP^1} (2)$ so that $a_1 = -1 < 0$.
		Here we see $\theta (p_{\sigma} , P) + \theta (p_{\sigma'} , P) > \pi$.
	\end{enumerate}
\end{exmp}

\setcounter{figure}{\value{thm}}
\begin{figure}[h!]
	\begin{tikzpicture}[scale=0.65]
		\draw (-8 , -3) node {(1) $X = \bbP^2$} ;
		\draw[very thin , color=lightgray] (-10 , -2) grid (-6 , 2) ;
		\draw[thick] (-9 , 2) -- (-9 , -1) -- (-6 , -1) ;
		\draw[thick , blue] (-6 , -1) -- (-9 , 2) ;
		\fill[blue]
			(-9 , 2) circle (1mm) node[above=2pt] {$p_{\gs'}$}
			(-6 , -1) circle (1mm) node[right=3pt] {$p_{\gs}$}
			;
		\fill (-8 , 0) circle (1.5mm) ;
		\fill (-9 , -1) circle (1mm) ;
		
		\draw (0 , -3) node {(2) $X = \bbP^1 \times \bbP^1$} ;
		\draw[very thin , color=lightgray] (-2 , -2) grid (2 , 2) ;
		\draw[thick] (1 , 1) -- (-1 , 1) -- (-1 , -1) -- (1 , -1) ;	
		\draw[thick , blue] (1 , -1) -- (1 , 1) ;
		\fill[blue]
			(1 , 1) circle (1mm) node[above right] {$p_{\gs'}$}
			(1 , -1) circle (1mm) node[below right] {$p_{\gs}$}
			;
		\fill (0 , 0) circle (1.5mm) ;
		\fill
			(-1 , 1) circle (1mm)
			(-1 , -1) circle (1mm)
			;
		
		\draw (8 , -3) node {(3) $X = \bbF_1$} ;
		\draw[very thin , color=lightgray] (6 , -2) grid (10 , 2) ;
		\draw[thick] (9 , 0) -- (7 , 2) -- (7 , -1) -- (9 , -1) ;
		\draw[thick , blue] (9 , -1) -- (9 , 0) ;
		\fill[blue]
			(9 , 0) circle (1mm) node[above right] {$p_{\gs'}$}
			(9 , -1) circle (1mm) node[below right] {$p_{\gs}$}
			;
		\fill (8 , 0) circle (1.5mm) ;
		\fill
			(7 , 2) circle (1mm)
			(7 , -1) circle (1mm)
			;
	\end{tikzpicture}
	\caption{Polytopes $P(X , -K_X)$ of toric surfaces}
	\label{fig:polytopes}
\end{figure}
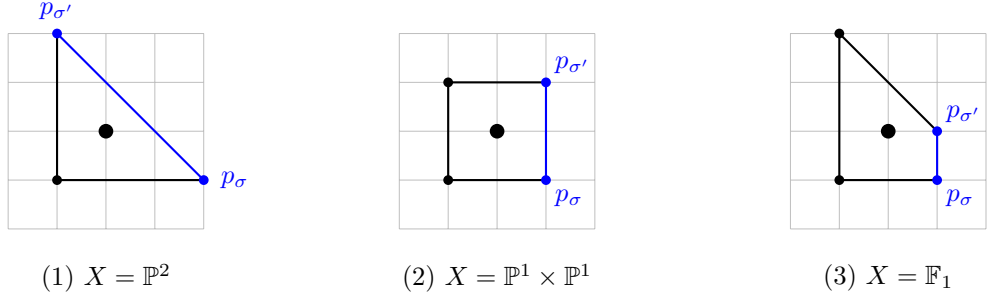

\section{Proof of Theorem \ref{thm}}


\begin{proof}[Proof of Theorem \ref{thm}]
	As promised,
	we will show that the polytope $P$ corresponding to $X$
	(together with any ample $T$-invariant divisor $D$) is an $n$-simplex.
	
	Let $A$ be a $2$-dimensional face of $P$.
	Let $m$ be the number of vertices of $A$,
	and let $p_1,...,p_m$ be the vertices of $A$,
	ordered so that $p_k$ is adjacent to $p_{k+1}$ for all $k = 1 , ... , m$ (where $p_{m+1}:=p_1$).
	Since $\calT_X$ is ample,
	its restriction to every invariant curve is ample.
	Then, by Proposition \ref{prop3} and Proposition \ref{prop4},
	$\gth(p_k , A) + \gth(p_{k+1} , A) < \pi$ for all $k$.
	This implies
	$$
	m \pi > \sum_{k = 1}^m (\gth (p_k , A) + \gth (p_{k+1} , A)) = 2 \sum_{k = 1}^m \gth (p_k , A) = 2 (m - 2) \pi.
	$$
	We get $m < 4$,
	implying $A$ is a triangle.
	The same is true for all $2$-dimensional faces of $P$.
	
	Now, we start with a vertex $q_0$ of $P$.
	Recall that every vertex of $P$ is adjacent to exactly $n$ vertices since $X$ is smooth and hence simplicial.
	Let $q_1,...,q_n$ be the $n$ points adjacent to $q_0$.
	Given $1<j\leq n$,
	let $A_j$ be the $2$-dimensional face containing the edges $\overline{q_0q_1}$ and $\overline{q_0q_j}$.
	Since $A_j$ is in fact a triangle,
	$q_1$ is also adjacent to $q_j$.
	Thus $q_1$ is adjacent to $q_0 , q_2 , ... , q_n$.
	Similarly, every $p_j$ is adjacent to exactly $p_0,...,\widehat{p_j},...,p_n$.
	Consequently, $p_0 , p_1 , ... , p_n$ are the only vertices of $P$,
	and hence $P$ is the $n$-simplex with vertices $p_0 , p_1 , ... , p_n$.
\end{proof}

\end{document}